\documentclass{amsart}
\usepackage{palatino, mathpazo}
\usepackage{mathrsfs}
\usepackage{amsfonts, amsthm, amsmath,amssymb}
\usepackage{enumerate}
\usepackage[all]{xy}
\usepackage{hyperref}
\newtheorem{thm}{Theorem}[section]
\newtheorem{defn}[thm]{Definition}

\newtheorem{corollary}[thm]{Corollary}
\newtheorem{lemma}[thm]{Lemma}
\newtheorem*{mlemma}{Main Lemma}

\theoremstyle{remark}
\newtheorem{remark}[thm]{Remark}

\newcommand{\bC}{\mathbb{C}}

\newcommand{\fD}{\mathfrak{D}}
\newcommand{\fM}{\mathfrak{M}}
\newcommand{\fP}{\mathfrak{P}}

\newcommand{\sM}{\mathscr{M}}
\newcommand{\fs}{\mathbf{LogSt}}
\newcommand{\lsch}{\mathbf{LogSch}}
\newcommand{\sch}{\mathbf{Sch}}
\newcommand{\St}{\mathbf{St}}
\newcommand{\gpd}{\mathbf{Grpd}}
\newcommand{\M}{M} 

\newcommand{\mbar}{\overline{\mathcal{M}}}
\newcommand{\vir}{\mathrm{vir}}

\newcommand{\pr}{\mathrm{pr}}

\DeclareMathOperator{\spec}{Spec}
\DeclareMathOperator{\Tor}{\mathcal{T}\!or}
\DeclareMathOperator{\Log}{\mathcal{L}\!og}

\title{A product formula for log Gromov--Witten invariants}

\author[Y.-P. \textsc{Lee}]{Yuan-Pin \textsc{Lee}}
\address{Department of Mathematics, University of Utah,
Salt Lake City, Utah 84112-0090, U.S.A.}
\email{yplee@math.utah.edu}

\author[F. \textsc{Qu}]{Feng \textsc{Qu}}
\address{
Beijing International Center for Mathematical Research, 
Peking University, No.~5 Yiheyuan Road, 
Beijing, 100871, China.}
\email{fengquest@gmail.com}

\keywords{log Gromov-Witten invariants, product formula}
\subjclass[2010]{Primary 14N35; Secondary 14A20, 14C17}
%
%
\begin{document}

\begin{abstract}
The purpose of this short article is to prove a product formula relating the 
log Gromov--Witten invariants of $V \times  W$ with those of $V$ and $W$
in the case the log structure on $V$ is trivial.
\end{abstract}

\maketitle

\setcounter{section}{-1}

\section{Introduction}
Product formulas in the literature of Gromov--Witten (GW) theory started 
with \cite{KM} for the genus zero Gromov--Witten invariants.
It was soon generalized in \cite{B} to (absolute) GW invariants
in any genus.
There is also an orbifold version in \cite{AJT}.
In this paper, we extend the product formula to the setting of 
\emph{relative GW theory} or more generally \emph{log GW theory}.

The absolute Gromov--Witten theory studies the intersection theory
on the moduli stacks of stable maps $\mbar_{g,n}(X, \beta)$
from an $n$-pointed curve with arithmetic genus $g$ 
to a fixed nonsingular projective variety $X$ with a fixed degree $\beta$ 
in the Mori cone $NE(X)$ of effective curves in $X$ 
\[
 f\colon  (C, x_1, x_2, \ldots, x_n) \to X, \, \text{such that }\,
 f_*([C]) = \beta.
\]
For notational convenience, we denote such a class by $[f]$.
Intuitively, GW invariants count the numbers of curves passing through
$n$ fixed cycles $\alpha_1, \ldots, \alpha_n$ in $X$ 
with the above given conditions.
To put it on a mathematically sound setting, one defines the 
invariants as intersection numbers on $\mbar_{g,n}(X, \beta)$
in the following way.
By the functorial properties of the moduli stacks, there are the
\emph{evaluation morphisms}
\[
 ev_i \colon   \mbar_{g,n}(X, \beta) \to X, \, i= 1, \ldots, n,
\]
and \emph{stabilization morphism}
\[
 \pi \colon  \mbar_{g,n}(X, \beta) \to \mbar_{g,n},
\]
where $\mbar_{g,n}$ is the moduli stack of stable genus $g$,
$n$-pointed curves and
\[
  ev_i ([f]) = f(x_i) \in X, \quad 
  \pi ([f]) = [(\overline{C}, \overline{x}_1, \overline{x}_2, \ldots, 
  \overline{x}_n)] \in \mbar_{g,n},
\]
where $\overline{C}$ is the stabilization of the source curve $C$.
The GW invariants can be defined as
\[
 \int_{[\mbar_{g,n}(X, \beta)]^{vir}} \pi^*(\gamma) \prod_i ev_i^*(\alpha_i),
\]
where $[\mbar_{g,n}(X, \beta)]^{vir}$ is the \emph{virtual fundamental class}.
One can rephrase these invariants in terms of the cohomological field theory
\[
 R^X_{g,n,\beta} \colon  H^*(X)^{\otimes n} \to H^*(\mbar_{g,n})
\]
via
\[
 \int_{[\mbar_{g,n}(X, \beta)]^{vir}} \pi^*(\gamma) \prod_i ev_i^*(\alpha_i)
 = \int_{\mbar_{g,n}} \gamma . R^X_{g,n,\beta} ( \prod_i \alpha_i ) . 
\]
The details can be found in \cite{B} and references therein.

Intuitively, the \emph{relative} invariants 
as defined in \cite{LR} and \cite{jL} can be considered as refined counting.
Let $(X, D)$ be a pair consisting of a nonsingular projective variety $X$ and a smooth
divisor $D$ in $X$.
If the curve $C$ does not lie in $D$, it intersects with $D$ at $\rho$ points
with multiplicities $\mu_1, \ldots, \mu_{\rho}$ such that
\[
 \sum_j \mu_j = \int_\beta [D]. 
\]
The refined counting is to fix the profile $(\mu_1, \ldots, \mu_{\rho})$ and
constraint the $\rho$ points to lie in chosen cycles $\{ \delta_j \}$ in $D$.
Similarly, one can define the relative invariants as intersection
numbers on relative moduli stacks as above.
See \cite{jL} and \cite{LR} for details.
There is also a similar reformulation in terms of cohomological field theory.
See Section~\ref{s:3}.

The divisor $D$ in $X$ gives rise to a divisorial log structure on X.
Recently relative invariants have been generalize to the setting of \emph{log} GW (\cite{AC,C,GS}).
See Section~\ref{s:1} for a brief summary of log geometry and log GW theory.

In a sense, the study of Gromov--Witten theory is the study
of the virtual fundamental classes.
The \emph{product formula} is a statement that 
\emph{GW invariants of $V$ and $W$ determine the invariants of $V \times W$}. 
See Equation~\eqref{e:3}.
It can be written in the form of certain
functorial properties of virtual classes.
In \cite{B}, K.~Behrend proves a product formula for absolute
GW invariants by first establishing a corresponding functorial property
of virtual fundamental classes.
In this note, we approach the product formula in the relative and log
settings in a similar way.
The \emph{main results} are Theorem~\ref{t:main} and Corollary~\ref{c:2.2}, 
which \emph{expresses the log/relative invariants of $X \times (Y,D)$ by
invariants of $X$ and of $(Y,D)$}.
The logarithmic approach to relative GW invariants of Abramovich-Chen and Gross-Siebert (\cite{AC,C, GS}) avoids the expanded degenerations of Li (\cite{jL}) and allows us to adapt
Behrend's original proof, but it also presents new technical difficulties.
We were not able to prove the product formula in the 
general log setting and we have to assume the log structure on one of the factors
to be trivial. For the general case, see Section \ref{s3} for some initial attempts and speculations.


This product formula could be useful in the study of Gromov--Witten theory, even when no explicit product  or log geometry is involved in the statement. For example, it plays a role in proving the crepant transformation conjecture for ordinary flops with non-split vector bundles in \cite{LLQW}. There the degeneration technique is extensively employed and the product 
formula is applied to treat fiber integrals which naturally occur in the degeneration process.


\section{Preliminaries} \label{s:1}

\subsection{Log geometry}
We work over the base log scheme $\spec \bC$ with the trivial log structure.
We refer to \cite[Sections~1-4]{K} for general background on
log structures on schemes, and \cite[Section~5]{O2} for log stacks.
For the reader's convenience, we recall
the basic properties we need about 
Olsson's Log stack and 
saturated morphisms.

\subsubsection{$\Tor_X$}
Denote by $\fs$ the category of fine saturated (fs) log algebraic stacks, and $\St$ the category of algebraic stacks.
\footnote{In a higher categorical sense. 
In other words, $\fs$ and $\St$ are 2-categories, or (2,1) categories.}

For a fs log stack $X$, $\Tor_X$ is introduced in \cite[Remark 5.26]{O2} to parametrize all fs log schemes over $X$. It follows from the definition of $\Tor_X$ that a map $S \to X$ in $\fs$ factors 
as 
$
 S \to \Tor_X \to X.
$
We can view $X$ as an open substack of $\Tor_X$ parametrizing strict maps $S \to X$.
 Note that $\Tor_X$ has a natural fs log structure so that the factorization $S \to \Tor_X$ above is strict, and $\Tor_X \to X$ is log \'{e}tale.

It is easy to check when $X$ is log smooth, it is open dense in $\Tor_X$. 
%
In particular if $X$ is log smooth and irreducible, $\Tor_X$ is irreducible.

\begin{remark}
Let $\sch$ be the category of schemes over $\bC$, $\gpd$ the category of (small) groupoids.
Recall a stack is a functor 
$\sch \to \gpd$ that satisfies certain 'sheaf' condition. A log stack is a stack with a log struture,
and a log structure can be understood as a map from the stack to $\Log_\bC$.

It is more natural to describe a fs log moduli stack $X$ by its functor of points than
specifying its underlying stack and log structure. 
As the Yoneda embedding for the (2,1) category $\fs$ is fully faithful, an object $X$ in $\fs$ 
is uniquely determined by:
\[
\hom_{\fs}(-, X)\colon \lsch \to \gpd.
\]
Here the stack condition for $X$ allows us to restrict $\hom_{\fs}(-, X)$ from $\fs$
to its full subcategory $\lsch$ of fs log schemes. 

Given a log moduli functor $\mathbb{X} \colon \lsch \to \gpd$ such that it is isomorphic to 
$\hom_{\fs}(-, X)$ for some fs log stack $X$, one 
might recover $X$ by considering the stack $\Tor_\mathbb{X}\colon \sch \to \gpd$ which corresponds to 
$\Tor_X$, then the underlying stack $\underline{X}$ of $X$ is the substack of 
$\Tor_\mathbb{X}$ satisfying a minimal condition, and the log structure of $X$
is the restriction of the natural log structure on $\Tor_\mathbb{X}$. See \cite{G1} for details.

\end{remark}

\subsubsection{Saturated morphisms}
\begin{defn}
Let $P, Q$ be saturated monoids, a \emph{map $P \to Q$ is saturated}, 
if it is integral and the push out of
\[
\xymatrix{
P \ar[r]\ar[d] & Q \\
R  
}
\]
is saturated when $R$ is saturated.
\end{defn}

\begin{defn}
Let $(X, \M_X), (Y, \M_Y) $ be fs log schemes, 
a map $f \colon  (X, \M_X) \to (Y, \M_Y)$ is called \emph{saturated},
if for any $x \in X, y=f(x)$, the induced map between characteristics 
$\overline{\M}_{Y,\bar{y}} \to \overline{\M}_{X, \bar{x}}$ is saturated.
\end{defn}

\begin{remark}
Given the above definition, a saturated morphism between fs log stacks can be 
defined locally with respect to the lisse-\'{e}tale topology.
\end{remark}

It follows from the definition that  saturated morphisms satisfy the following properties:
\begin{itemize}
\item They are stable under composition and base change in $\fs$.
\item
For a cartesian square
\[
\xymatrix{
(W,\M_W) \ar[r]\ar[d]   & (X,\M_X) \ar[d]^f\\
(Z, \M_Z)  \ar[r]            &(Y, \M_Y)\\
}
\] 
in $\fs$, when  $f$ is saturated, the underlying diagram of stacks
\[
\xymatrix{
W \ar[r]\ar[d]  &  X\ar[d] \\
Z  \ar[r]           & Y
}
\]
is cartesian in $\St$.
\end{itemize}

\subsection{Log Gromov--Witten theory}

\subsubsection{Log curves}
Let $\mbar_{g,n}$ (resp.\ $\mathfrak{M}_{g,n}$) be the algebraic stack of stable 
(resp.\ prestable) curves of genus $g$ with $n$ marked points.  
The log structure of $\mbar_{g,n}$ is the divisorial log structure associated 
with its boundary consisting of singular curves.  
The log structure of $\mathfrak{M}_{g,n}$ is determined from the  smooth chart 
$\sqcup_{m}\mbar_{g,n+m} \to \mathfrak{M}_{g,n}$, where the map from 
$\mbar_{g,n+m} \to \mathfrak{M}_{g,n}$ is forgetting the extra m marked points 
without stabilizing.   We view $\mbar_{g,n}$ and $\mathfrak{M}_{g,n}$ as log stacks from now on using the same notation.

Note that the forgetful map $\mbar_{g,n+1} \to \mbar_{g,n}$ is saturated, 
as it can be identified as the universal log curve. This implies the stabilization map
$\mathfrak{M}_{g,n} \to \mbar_{g,n}$ is saturated. 

\subsubsection{Log stable maps}
For a projective log smooth scheme $V$, let $\sM_{g,n}(V)$ be the fs log stack 
parameterizing stable log maps from log curves of genus $g$ with $n$ marked 
points (see \cite{AC,ACMW,C,GS} for more details).  
For a fs log scheme $S$,  a log map from $S$ to $\sM_{g,n}(V)$ corresponds to a 
stable log  map:
\begin{equation} \label{e:1}
\xymatrix{
C \ar[r]\ar[d] &  V\\
S &.
}
\end{equation}
We note that the underlying map of the diagram is a usual stable map.

A log map $ V \to W$ induces a stabilization map $\sM_{g,n}(V) \to \sM_{g,n}(W)$.
Given $S \to \sM_{g,n}(V)$ which corresponds to a diagram \eqref{e:1}, 
the composition $S \to \sM_{g,n}(V) \to \sM_{g,n}(W) $ corresponds to 
\[
\xymatrix{
\overline{C} \ar[r]\ar[d] &  W\\
S &.
}
\] 
where the underlying map of $\overline{C} \to W$ is the stabilization of 
the underlying map $ C \to V \to W$, and the log structure on $\overline{C}$
is  the push forward of the log structure on $C$ with respect to the partial 
stabilization $C \to \overline{C}$. 
(See \cite[appendix B]{AMW}.)
\subsubsection{Perfect obstruction theories}
We have a natural log map $\sM_{g,n}(V) \to \mathfrak{M}_{g,n}$ forgetting the map to $V$.
As log deformations for $\sM_{g,n}(V) \to \mathfrak{M}_{g,n}$ is the same as 
deformations for  the underlying map
of $\sM_{g,n}(V) \to \Tor_{\fM_{g,n}}$, a (relative) perfect obstruction theory for the log map 
$\sM_{g,n}(V) \to \mathfrak{M}_{g,n}$
is by definition a perfect obstruction theory for the underlying map of $\sM_{g,n}(V) \to \Tor_{\fM_{g,n}}$.


A perfect obstruction theory for $\sM_{g,n}(V) \to \fM_{g,n}$
is defined in \cite[Section~5]{GS},
tangent space and obstruction space at $[f\colon C \to V] \in \sM_{g,n}(V)$
are given by $H^0(f^*T_V)$ and $H^1(f^*T_V)$ respectively, where
$T_V$ is the log tangent bundle of V.

If we have a factorization $\sM_{g,n}(V) \to \mathfrak{N} \to \mathfrak{M}_{g,n}$, 
and $\mathfrak{N} \to \mathfrak{M}_{g,n}$ is log \'{e}tale,
a perfect obstruction theory for $\sM_{g,n}(V) \to \fM_{g,n}$ 
induces a perfect obstruction theory for $\sM_{g,n}(V) \to \mathfrak{N}$ since
$\Tor_{\mathfrak{N}} \to \Tor_{\fM_{g,n}}$ is \'etale.



\section{Product formula in terms of virtual classes} \label{s:2}

\subsection{Setup}
We start by introducing relevant commutative diagrams, which are the log enhancement
of those in \cite{B}.

We define a fs log stack $\fD$ 
by its functor of points. Given any fs log scheme $S$, a map $S \to \fD$ corresponds
to the data consisting of $n$-pointed
prestable log curves $C, C', C''$ of genus $g$ over $S$,
together with partial stabilizations $p'\colon C \to C'$ and $p''\colon C \to C''$, 
such that no component of $C$ is contracted by both $p',p''$. Note that as log curves over $S$, the log structure of $C'$ resp. $C''$
is given by the pushforward of the log structure of $C$ along $p'$ resp. $p''$.

Define $e\colon \fD \to  \mathfrak{M}_{g,n}$ as the forgetful morphism taking
$(p',p'')$ to $C$.
It is proved in \cite[Lemma 4]{B} that the underlying map of $e$ is \'{e}tale.
As $e$ is strict, it is log \'{e}tale.

We claim that the following commutative diagram is cartesian
in $\fs$,
\begin{equation} \label{e:2}
\xymatrix{
\sM_{g,n}(V \times W)  \ar[r]\ar[d] & \sM_{g,n}(V) \times
\sM_{g,n}(W)  \ar[d]\\
\fD \ar[r] &\fM_{g,n} \times \fM_{g,n}
}
\end{equation}
Here the top horizontal arrow is determined by stabilization maps
$\sM_{g,n}(V \times W) \to \sM_{g,n}(V)$,
$\sM_{g,n}(V \times W) \to \sM_{g,n}(W)$
induced from the projections  $\pr_1\colon V \times W \to V$,
$\pr_2\colon V \times W \to W$.
The left vertical arrow is defined by retaining the curve together with  
its partial stabilizations with respect to $\pr_1,\pr_2$.
 
The reason for the diagram being cartesian is essentially the same as that 
in \cite[Proposition 5]{B}. For a fs log scheme S,
a commutative diagram  
\[
\xymatrix{
 S \ar[r]\ar[d]        &\sM_{g,n}(V) \times \sM_{g,n}(W) \ar[d] \\
\fD \ar[r]               &\fM_{g,n} \times \fM_{g,n}\\
}
\]
corresponds to  stable log maps $C' \to V, C'' \to W$ over $S$, 
together with stabilization between log curves $C \to C', C \to C''$.
\footnote{Note that $\sM_{g,n}(V)(S)\to \fM_{g,n}(S)$ and $\sM_{g,n}(W)(S)\to \fM_{g,n}(S)$ are iso-fibrations of groupoids.}
This recovers a stable log map $C \to C'\times C''  \to V \times W$.
Indeed, $\fD$ is constructed to make \eqref{e:2} cartesian.
 
We then extend the above cartesian diagram 
in $\fs$ to
\[
\xymatrix{
\sM_{g,n}(V \times W)  \ar[r]^-h \ar[d]^c & P\ar[r]\ar[d]  &  \sM_{g,n}(V) \times \sM_{g,n}(W) \ar[d]^a\\
\fD \ar[r]^l\ar[d]^e                                   &  \fP \ar[r]^-\phi \ar[d]& \fM_{g,n} \times \fM_{g,n} \ar[d] \\
\fM_{g,n} &   \mbar_{g,n} \ar[r]^-\Delta & \mbar_{g,n}  \times \mbar_{g,n} \, .
}
\]
Here all squares are constructed by taking fiber product. If $V$ and $W$ have trivial log structures, this reduces to Diagram (2) of \cite{B}. 
 
\begin{defn}
We say that the product formula (of virtual fundamental classes) holds for $V$ and $W$ if
\begin{equation} \label{e:3}
h_*([\sM_{g,n}(V \times W)]^{\vir})
=\Delta^!([\sM_{g,n}(V)]^{\vir} \times [\sM_{g,n}(W)]^{\vir}).
\end{equation}
\end{defn} 
 
In \cite{B}, Behrend showed that the
product formula holds for $V$ and $W$ when $V$ and $W$ are smooth projective schemes, or log smooth schemes with
trivial log structures, our goal of this paper is to
extend his result to the case when $W$ has nontrivial log structure.




\subsection{Product formula in log GW theory} \label{QQQ}
When $W$ has nontrivial log structure, %
we factor 
\[
 a\colon  \sM_{g,n}(V) \times \sM_{g,n}(W) \to \fM_{g,n} \times  \fM_{g,n} 
\]
into 
\[
 \sM_{g,n}(V) \times \sM_{g,n}(W)  \overset{a'}{\to} \fM_{g,n} \times \Tor_{\fM_{g,n}}
 \to \fM_{g,n} \times  \fM_{g,n}.
\]
We then have the following commutative diagram in which 
all squares are cartesian in $\fs$,
\begin{equation} \label{e:4}
\xymatrix{
\sM_{g,n}(V \times W)  \ar[r]^-h \ar[d]^{c'} & P\ar[r]\ar[d]  &  \sM_{g,n}(V) \times \sM_{g,n}(W) \ar[d]^{a'}\\
\fD' \ar[r]^{l'}\ar[d]                                 &  \fP' \ar[r]^-{\phi'} \ar[d]& \fM_{g,n} \times \Tor_{\fM_{g,n}} \ar[d] \\
\fD \ar[r]^l\ar[d]                                   &  \fP \ar[r]^-\phi \ar[d]& \fM_{g,n} \times \fM_{g,n} \ar[d] \\
\fM_{g,n} &   \mbar_{g,n} \ar[r]^-\Delta & \mbar_{g,n}  \times \mbar_{g,n} \, .
}
\end{equation}

\begin{mlemma} \label{l:main}
 \begin{enumerate}[(I)]
\item The underlying square diagrams in $\St$  are all cartesian. 
\item The relative perfect obstruction theories for $a'$ and $c'$ are compatible.
\item $\fD', \fP'$ are irreducible, and $l'$ is of degree 1.
\item $\phi'$ is a l.c.i. and is compatible with $\Delta$ in the sense that  the cotangent complex 
$\mathcal{L}_{\Delta}$ pulls back
to $\mathcal{L}_{\phi'}$.
\end{enumerate}
\end{mlemma}

\begin{thm} \label{t:main}
Using the notations in \eqref{e:4}, we have
\[
 h_*([\sM_{g,n}(V \times W)]^{\vir})
=\Delta^!([\sM_{g,n}(V)]^{\vir} \times [\sM_{g,n}(W)]^{\vir})
\]
in the log GW setting, assuming the trivial log structure on $V$.
\end{thm}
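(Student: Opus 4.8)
The plan is to deduce Theorem~\ref{t:main} from the Main Lemma by following Behrend's argument in \cite{B}, with the Olsson stack $\Tor_{\fM_{g,n}}$ inserted so that everything becomes a statement about ordinary stacks. First I would pass from the diagram \eqref{e:4} in $\fs$ to the underlying diagram in $\St$; by part (I) of the Main Lemma all squares there remain cartesian, so the refined Gysin pullbacks $\Delta^!$, $\phi^!$, $(\phi')^!$ in the sense of \cite{B} are available and the usual compatibilities (functoriality of refined intersection products, commutativity with proper pushforward) apply. The point of factoring $a$ through $\fM_{g,n}\times\Tor_{\fM_{g,n}}$ is that the upper part of the diagram now involves only log \'{e}tale maps on Olsson stacks, where virtual pullback is well behaved, while the bottom square still carries the diagonal $\Delta$ that produces the product of virtual classes.

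Next I would set up the virtual classes intrinsically. By the discussion of perfect obstruction theories in Section~\ref{s:1}, the perfect obstruction theory for $\sM_{g,n}(V\times W)\to\fM_{g,n}$ with fibre $H^\bullet(f^*T_{V\times W})=H^\bullet(f_1^*T_V)\oplus H^\bullet(f_2^*T_W)$ splits according to the two projections, and likewise for the factors; part (II) of the Main Lemma says precisely that under $c'$ (resp.\ $a'$) this obstruction theory is the pullback of a relative obstruction theory, so $[\sM_{g,n}(V\times W)]^{\vir}$ is the virtual pullback along $c'$ of $[\fD']$ and $[\sM_{g,n}(V)]^{\vir}\times[\sM_{g,n}(W)]^{\vir}$ is the virtual pullback along $a'$ of $[\fM_{g,n}\times\Tor_{\fM_{g,n}}]$ (using that $\sM_{g,n}(V)\to\fM_{g,n}$ is already a morphism with trivial log structure on $V$, so its virtual class is unaffected by the $\Tor$-enhancement on the $W$-factor). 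Then I would invoke the functoriality of virtual pullback (Manolache, or \cite{B}) through the cartesian square with top row $h$ to get
\[
h_*([\sM_{g,n}(V\times W)]^{\vir}) = (\phi')^!\bigl([\fD'] \text{ pushed to } P\bigr)
\]
and more precisely $h_*([\sM_{g,n}(V\times W)]^{\vir})=(\phi'\circ l')^{!}([\sM_{g,n}(V)]^{\vir}\times[\sM_{g,n}(W)]^{\vir})$ after threading the virtual pullbacks through both cartesian squares; here part (III), that $\fD',\fP'$ are irreducible and $l'$ has degree $1$, is what guarantees $l'_*[\fD']=[\fP']$ and hence that no multiplicity is introduced.

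Finally I would identify $(\phi'\circ l')^!$ with $\Delta^!$. By part (IV), $\phi'$ is an l.c.i.\ morphism whose cotangent complex is the pullback of $\mathcal{L}_\Delta$; since $\fD\to\mbar_{g,n}$ and the vertical maps realize $\fP'\to\mbar_{g,n}$ compatibly, the refined Gysin map $(\phi')^!$ agrees with $\Delta^!$ on classes pulled back from $\mbar_{g,n}\times\mbar_{g,n}$, and composing with the degree-one $l'$ leaves it unchanged. Combining the three displayed identifications yields
\[
h_*([\sM_{g,n}(V\times W)]^{\vir}) = \Delta^!([\sM_{g,n}(V)]^{\vir}\times[\sM_{g,n}(W)]^{\vir}),
\]
which is the assertion. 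I expect the main obstacle to be the bookkeeping in the second step: making the splitting of the obstruction theory and the compatibility in Main Lemma~(II) interact correctly with the $\Tor$-enhancement, so that the two virtual pullbacks (along $c'$ and along $a'$) genuinely match under $h$ — this is exactly the place where the hypothesis that the log structure on $V$ is trivial is used, and where the general log case breaks down.
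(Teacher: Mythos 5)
Your proposal follows the paper's own argument: Costello's pushforward theorem applied to the left square of \eqref{e:4} (using (II) and (III), in particular $l'_*[\fD']=[\fP']$) gives $h_*[\sM_{g,n}(V\times W)]^{\vir}=a'^![\fP']$, the commutativity $a'^!\phi'^!=\phi'^!a'^!$ of virtual pullbacks for the right square gives $a'^![\fP']=\phi'^!([\sM_{g,n}(V)]^{\vir}\times[\sM_{g,n}(W)]^{\vir})$, and (IV) identifies $\phi'^!$ with $\Delta^!$ --- exactly the three steps in the paper. The only slip is notational: $(\phi'\circ l')^!$ is not a defined Gysin map, since the degree-one map $l'$ is merely proper (not l.c.i.) and enters only through the pushforward $l'_*[\fD']=[\fP']$ inside Costello's theorem, but the chain of equalities you intend is the one above.
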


\begin{proof}
We have virtual pullbacks $c'^!, a'^!, \phi'^!,$ and $\Delta^!$. (See \cite[Section 3.1]{M1})

For the upper left square of diagram \eqref{e:4},
we have $$h_*([\sM_{g,n}(V \times W)]^{\vir})=a'^![\fP'],$$
by (II), (III), and Costello's pushforward theorem \cite[Theorem 5.0.1]{Co}.

For the upper right square of diagram \eqref{e:4}, 
note that $\phi'^!([\fM_{g,n}] \times [\Tor_{\fM_{g,n}}])$  equals $[\fP']$, 
and $a'^!([\fM_{g,n}] \times [\Tor_{\fM_{g,n}}])$ is  $[\sM_{g,n}(V)]^{\vir} \times [\sM_{g,n}(W)]^{\vir}$.
By \cite[Theorem 4.3]{M1}, we know that $a'^!\phi'^!=\phi'^!a'^!$, so 
$$a'^![\fP']=\phi'^!([\sM_{g,n}(V)]^{\vir} \times [\sM_{g,n}(W)]^{\vir}).$$ 

Now (IV) gives 
$$\phi'^!([\sM_{g,n}(V)]^{\vir} \times [\sM_{g,n}(W)]^{\vir}]) = 
\Delta^!([\sM_{g,n}(V)]^{\vir} \times [\sM_{g,n}(W)]^{\vir}),$$ 
and we can complete the proof by combining these equations.
\end{proof}

\subsection{Proof of Main Lemma}
\subsubsection*{(I)} 
As $a'$ is strict, the first row of squares are cartesian.
Since $\fM_{g,n} \to \mbar_{g,n}$ is saturated, the bottom square is cartesian.

For the second row, to prove the two squares are cartesian is the same 
as showing the squares in the following diagram are cartesian in $\St$.
\begin{equation} \label{e:5}
\xymatrix{
\fD' \ar[r]^{l'} \ar[d]  & \fP' \ar[r]^-{\pr_2\circ\phi'} \ar[d] &   \Tor_{\fM_{g,n}} \ar[d]\\
\fD \ar[r]^l               &  \fP \ar[r]^-{\pr_2\circ\phi}  \ar[r] & \fM_{g,n} .
}
\end{equation}
We claim that $l \circ \pr_2\circ\phi$ and $\pr_2\circ\phi$ are both saturated.
$l \circ \pr_2\circ\phi$ is saturated because a partial stabilization map locally  
is given by forgetting marked points.
(See the proof of \cite[Proposition 3]{B}. 
What we need is that Diagram (4) there to be commutative.)
$\pr_2\circ\phi$ is saturated as it is the base change by
$\fM_{g.n} \to \mbar_{g,n}$
\[
\xymatrix{
\fP \ar[r]^-{\pr_2\circ\phi} \ar[d]  & \fM_{g,n} \ar[d]\\
\fM_{g.n}\ar[r]                          & \mbar_{g,n} .\\
}
\]
Thus, both squares in \eqref{e:5} are cartesian.

\subsubsection*{(II)} 
We remark that  as $\Tor_{\fM_{g,n}} \to \mathfrak{M}_{g,n}$ is log \'{e}tale, 
$\fD'$ is log \'{e}tale over $\fD$.
Thus the relative perfect obstruction theory for the log map
$\sM_{g,n}(V \times W) \to \mathfrak{M}_{g,n}$ can be viewed as a 
relative perfect obstruction theory for the underlying map of $c'$. 
Compatibility check is the same as in \cite[Propsition 6]{B}.

\subsubsection*{(III)} 
Start with Diagram \eqref{e:5}.
The open embedding 
$\fM_{g,n} \to \Tor_{\fM_{g,n}}$ induces 
\begin{equation} \label{e:6}
\xymatrix{
\fD \ar[r]^l    \ar[d]   & \fP \ar[r]^-{\pr_2\circ\phi}\ar[d]   & \fM_{g,n} \ar[d]\\
\fD' \ar[r]^{l'} \ar[d]   &\fP'  \ar[r]^-{\pr_2\circ\phi'} \ar[d] & \Tor_{\fM_{g,n}} \ar[d]\\
\fD \ar[r]^l                & \fP \ar[r]^-{\pr_2\circ\phi}  & \fM_{g,n} \, .\\
}
\end{equation}
$\mathfrak{M}_{g,n}$ being log smooth, it is open and dense in $\Tor_{\fM_{g,n}}$.
Note that both
$\pr_2\circ\phi, l\circ\pr_2\circ\phi$ are flat surjective, we conclude 
$\fD$(resp. $\fP$) are open dense substacks of $\fD'$ (resp.\ $\fP'$). 
(III) then follows from properties of $\fD, \fP$ and $l$. (See \cite[Proposition 3]{B}).

\subsubsection*{(IV)} 
We factor $\fM_{g,n} \times \Tor_{\fM_{g,n}} \to \mbar_{g,n} \times \mbar_{g,n}$ 
into
\[
\fM_{g,n} \times \Tor_{\fM_{g,n}} \to \mbar_{g,n} \times \Tor_{\fM_{g,n}} \to
\mbar_{g,n} \times \Tor_{\mbar_{g,n}}  \to  \mbar_{g,n} \times \mbar_{g,n}.
\]
First arrow is saturated and flat, and second arrow strict and smooth.

For the third arrow, note that 
\begin{equation*}
\xymatrix{
\Tor_{\mbar_{g,n}} \ar[r]\ar[d] & \mbar_{g,n} \times \Tor_{\mbar_{g,n}} \ar[d]\\
\mbar_{g,n}\ar[r]^-\Delta          & \mbar_{g,n} \times \mbar_{g,n}\\
}
\end{equation*} 
is cartesian in $\fs$ as well as in $\St$, and horizontal 
arrows are compatible l.c.i.\ maps.   It is then easy to see $\phi'$ and $\Delta$ are compatible.

\subsection{Product formula for families}
As we need the product formula for equivariant invariants in \cite{LLQW}, we will adapt the arguments
above to families. 

Let $X \to S$ and $Y\to T$ be two families of log smooth projective varieties over log smooth and irreducible bases. We would like to relate GW invariants of the family $X\times Y \to S \times T$ to those of $X\to S$ and $Y\to T$.

Denote by $\sM_{g,n}(X/S), \sM_{g,n}(Y/T),\sM_{g,n}(X\times Y/S\times T)$ log stacks of stable log maps from genus $g$ curves with $n$ marked points to these families. 

\begin{remark}
Let $X \to S$ be a family of log smooth projective varieties over a log stack $S$. 
As a cartesian digram 
\[\xymatrix{
X' \ar[r]\ar[d]  & X \ar[d]\\
S' \ar[r]          & S
}\] in $\fs$
induces a cartesian diagram
\[
\xymatrix{
\sM_{g,n}(X'/S') \ar[r]\ar[d] & \sM_{g,n}(X/S)\ar[d]\\
S' \ar[r]          & S
}
\] in $\fs$.
It follows that
$\sM_{g,n}(X/S)$ is an algebraic stack locally of finite type over $S$ by \cite[Theorem 0.1]{GS}.

\end{remark}

It is easy to see we have a cartesian diagram in $\fs$
\[
\xymatrix{
\sM_{g,n}(X\times Y/S\times T)  \ar[r]\ar[d] 
   & \sM_{g,n}(X/S) \times \sM_{g,n}(Y/T)  \ar[d]\\
\fD \ar[r] &\fM_{g,n} \times \fM_{g,n}\ ,
}
\]
and we consider the diagram 
\[
\xymatrix{
\sM_{g,n}(X\times Y/S\times T)  \ar[r]^-h \ar[d]^c 
  & P\ar[r]\ar[d]  
    &   \sM_{g,n}(X/S) \times \sM_{g,n}(Y/T) \ar[d]^a\\
\fD \ar[r]^l\ar[d]^e                                   &  \fP \ar[r]^-\phi \ar[d]& \fM_{g,n} \times \fM_{g,n} \ar[d] \\
\fM_{g,n} &   \mbar_{g,n} \ar[r]^-\Delta & \mbar_{g,n}  \times \mbar_{g,n} \, .
}
\] as before.

\begin{thm}
Let $X \to S$ be 
a family of smooth projective varieties over a smooth, pure dimensional stack $S$, 
and $Y \to T$ a family of log smooth projective varieties over a stack $T$ which is log smooth and irreducible.

Then we have \[
 h_*([\sM_{g,n}(X \times Y/S\times T)]^{\vir})
=\Delta^!([\sM_{g,n}(X/S)]^{\vir} \times [\sM_{g,n}(Y/T)]^{\vir})
\]
\end{thm}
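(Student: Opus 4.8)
The plan is to mimic the proof of Theorem~\ref{t:main} (the non-family case) essentially verbatim, replacing $\sM_{g,n}(V)$, $\sM_{g,n}(W)$, $\sM_{g,n}(V\times W)$ by the relative moduli stacks $\sM_{g,n}(X/S)$, $\sM_{g,n}(Y/T)$, $\sM_{g,n}(X\times Y/S\times T)$, and checking that each ingredient of the Main Lemma survives in the relative setting. First I would factor $a$ through $\fM_{g,n}\times\Tor_{\fM_{g,n}}$ exactly as before, obtaining the analogue of Diagram~\eqref{e:4} with $\fD',\fP'$ defined by fiber product; the only new feature is that $\sM_{g,n}(X/S)$ and $\sM_{g,n}(Y/T)$ now carry extra data of maps to $S$ and $T$, but the middle and bottom rows of the diagram are built entirely out of $\fM_{g,n},\Tor_{\fM_{g,n}},\mbar_{g,n}$ and the partial-stabilization stack $\fD$, so they are identical to the absolute case. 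This means parts (I), (III) and (IV) of the Main Lemma are literally unchanged: (I) follows from strictness of $a'$, saturatedness of $\fM_{g,n}\to\mbar_{g,n}$, and the saturatedness computations for $\pr_2\circ\phi$ and $l\circ\pr_2\circ\phi$; (III) follows from log smoothness of $\fM_{g,n}$ inside $\Tor_{\fM_{g,n}}$ together with flatness and surjectivity of those same maps and the known irreducibility properties of $\fD,\fP$ from \cite[Proposition~3]{B}; and (IV) is the same factorization of $\fM_{g,n}\times\Tor_{\fM_{g,n}}\to\mbar_{g,n}\times\mbar_{g,n}$ and the same l.c.i.\ compatibility with $\Delta$.

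The work specific to the family case is concentrated in two places. The first is part (II): I need the relative perfect obstruction theory for $\sM_{g,n}(X\times Y/S\times T)\to\fM_{g,n}$, viewed via log \'etaleness of $\Tor_{\fM_{g,n}}\to\fM_{g,n}$ as a perfect obstruction theory for the underlying map of $c'$, to be compatible with the relative perfect obstruction theory for $a'$ obtained from those of $\sM_{g,n}(X/S)\to\fM_{g,n}$ and $\sM_{g,n}(Y/T)\to\fM_{g,n}$. Since $X\to S$ is a family of \emph{smooth} (not merely log smooth) varieties, the log tangent complex of a point $[f\colon C\to X]$ relative to $\fM_{g,n}$ is just $R\pi_*f^*T_{X/S}$, and $T_{(X\times Y)/(S\times T)}\cong \pr_1^*T_{X/S}\oplus\pr_2^*T_{Y/T}$ pulled back along the map to $X\times Y$; the splitting of the obstruction theory then follows exactly as in \cite[Proposition~6]{B}, the only extra bookkeeping being that we now work relative to $S\times T$ rather than over $\spec\bC$. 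The second place is the virtual-class bookkeeping in Theorem~\ref{t:main}'s proof: I must record that $[\sM_{g,n}(X/S)]^{\vir}=a'^!([\fM_{g,n}]\times[\Tor_{\fM_{g,n}}])$-type identities still make sense, using that $\sM_{g,n}(X/S)$ is algebraic and locally of finite type over $S$ (the Remark preceding the theorem, via \cite[Theorem~0.1]{GS}), and that $S$ being smooth and pure-dimensional and $T$ being log smooth and irreducible guarantees $[\fM_{g,n}\times\Tor_{\fM_{g,n}}]$ makes sense and pulls back correctly — here one uses irreducibility of $\Tor_{\fM_{g,n}}$ in the factor over $T$ exactly as (III) uses it.

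With (I)--(IV) in hand, the proof concludes as in Theorem~\ref{t:main}: apply Costello's pushforward theorem \cite[Theorem~5.0.1]{Co} to the upper-left square to get $h_*([\sM_{g,n}(X\times Y/S\times T)]^{\vir})=a'^![\fP']$, use commutativity of virtual pullbacks $a'^!\phi'^!=\phi'^!a'^!$ from \cite[Theorem~4.3]{M1} on the upper-right square to rewrite this as $\phi'^!([\sM_{g,n}(X/S)]^{\vir}\times[\sM_{g,n}(Y/T)]^{\vir})$, and finally invoke the $\Delta$-compatibility of $\phi'$ from (IV) to replace $\phi'^!$ by $\Delta^!$. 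I expect part (II) to be the main obstacle in the sense that it requires genuinely re-deriving the obstruction-theory splitting relative to a base, rather than citing \cite{B} word for word; the hypothesis that $X\to S$ is honestly smooth (so that only $Y$ contributes log structure) is what keeps this manageable and is presumably why it is imposed. Everything else is a routine transcription of the arguments already given, and I would present it as such, pointing to the corresponding steps of the Main Lemma's proof rather than repeating them.
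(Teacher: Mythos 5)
Your overall strategy --- rerun the Section~\ref{QQQ} argument with the relative moduli stacks and check that (I)--(IV) survive --- is exactly what the paper does, and your identification of (II) as the only genuinely new point is correct. But your treatment of (II) has a concrete gap. You keep the middle and bottom rows of Diagram~\eqref{e:4} unchanged, so the relevant morphisms are $a'\colon\sM_{g,n}(X/S)\times\sM_{g,n}(Y/T)\to\fM_{g,n}\times\Tor_{\fM_{g,n}}$ and $c'$ over $\fD'$, with no $S$ or $T$ appearing in the targets; yet the natural perfect obstruction theories live relative to $\fM_{g,n}\times S$, $\fM_{g,n}\times T$ and $\fD\times(S\times T)$. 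Your formula ``the log tangent complex of $[f\colon C\to X]$ relative to $\fM_{g,n}$ is $R\pi_*f^*T_{X/S}$'' conflates these two bases: relative to $\fM_{g,n}$ alone the deformation theory also sees the base directions $T_S$ and $T_T$. The missing step --- and the one place the paper's proof actually does something new --- is to descend the obstruction theories along the projections $\fM_{g,n}\times S\to\fM_{g,n}$, $\fM_{g,n}\times T\to\fM_{g,n}$ and $\fD\times(S\times T)\to\fD$ via the cone construction of \cite[Appendix~B]{GP}; this is precisely where the hypotheses enter ($S$ smooth makes the first projection smooth, and $T$ log smooth makes $\Tor_{\fM_{g,n}\times T}\to\Tor_{\fM_{g,n}}$ smooth). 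Only after this descent are you literally in the situation of Section~\ref{QQQ}, and (II) then follows from standard triangle manipulations of cotangent complexes rather than from re-deriving Behrend's splitting relative to $S\times T$. You gesture at the role of the hypotheses in your second paragraph but attribute them to the existence of the class $[\fM_{g,n}\times\Tor_{\fM_{g,n}}]$, which is not where they are used.

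Everything else in your proposal --- the cartesian diagrams, parts (I), (III), (IV), Costello's pushforward, Manolache's commutativity $a'^!\phi'^!=\phi'^!a'^!$, and the final substitution of $\Delta^!$ for $\phi'^!$ --- matches the paper's argument, and the algebraicity input via \cite[Theorem~0.1]{GS} is correctly placed. The fix is short: insert the \cite[Appendix~B]{GP} descent before invoking (II), and note that the resulting induced obstruction theories are the ones defining the virtual classes appearing in the statement.
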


\begin{proof}
Consider the cartesian diagram in $\fs$,
\[
\xymatrix{
\sM_{g,n}(X\times Y/S\times T)  \ar[r]\ar[d] 
   & \sM_{g,n}(X/S) \times \sM_{g,n}(Y/T)  \ar[d]\\
\fD\times (S\times T) \ar[r]\ar[d] &(\fM_{g,n} \times S) \times (\fM_{g,n} \times T) \ar[d]\\
\fD \ar[r] &\fM_{g,n} \times \fM_{g,n}\ .
}
\]
As $T$ is log smooth,  
$\Tor_{\fM_{g,n} \times T} \to \Tor_{\fM_{g,n}}$ is smooth, the perfect obstruction theory for
$\sM_{g,n}(Y/T) \to \fM_{g,n} \times T$ then induces a perfect obstruction theory for the composition
\[
\sM_{g,n}(Y/T) \to \fM_{g,n} \times T \to \fM_{g,n}
\] by the arguments in \cite[Appendix B]{GP}.
Similarly, we have a perfect obstruction theory for 
\[
\sM_{g,n}(X/S) \to \fM_{g,n}\times S \to \fM_{g,n},
\]
and 
\[
\sM_{g,n}(X\times Y/S\times T) \to \fD\times (S\times T) \to \fD
\]
Now we are in the same situation as in Section \ref{QQQ}, 
identical arguments finish the proof. 
Note that (II) follows from standard properties of cotangent complexes of algebraic stacks.
\end{proof}

\section{Remarks for the general case} \label{s3}

\subsection{}
When $V$ has nontrivial log structure, we might consider factoring $a$ by  
\[
\sM_{g,n}(V) \times
\sM_{g,n}(W)  \to \Tor_{\fM_{g,n}} \times \Tor_{\fM_{g,n}}.
\]
In this case, the underlying diagram of  
\begin{equation*}
\xymatrix{
\Tor_{\mbar_{g,n}} \times_{\mbar_{g,n}} \Tor_{\mbar_{g,n}} \ar[r]\ar[d] &
 \Tor_{\mbar_{g,n}} \times \Tor_{\mbar_{g,n}}\ar[d]\\
\mbar_{g,n}  \ar[r]^-\Delta    & \mbar_{g,n} \times \mbar_{g,n}
}
\end{equation*} 
is no longer cartesian in $\St$.
If it is true that $\sM_{g,n}(V) \to \mathfrak{M}_{g,n}$ is saturated, 
we can replace $\Tor_{\fM_{g,n}}$ by its open substack parameterizing  saturated 
maps and  the argument we used can be adapted to this more general setting.

However, the map $\sM_{g,n}(V) \to \mathfrak{M}_{g,n}$ is not even integral in general, and 
further understanding concerning the log structure of $\sM_{g,n}(V)$ seems necessary.

\subsection{}
To prove the product formula holds for $V$ and $W$, we can assume $V$ and $W$ are smooth and log smooth.
This is achieved by using desingularizations of log smooth schemes and invariance of virtual classes under certain log modifications.

By \cite[Theorem 5.10]{Ni}, for any log smooth variety $X$, there exists a log blow up $\pi\colon Y \to X$
such that $Y$ is smooth, log smooth and $\pi$ is birational. In particular, $\pi$ is proper, birational, and log \'etale.

\begin{lemma}
Let $\Phi\colon \widetilde{V} \to V$ and $\Psi\colon \widetilde{W} \to W$ be proper, birational, log \'etale maps between log smooth projective varities.
If the product formula holds for $\widetilde{V}$ and $\widetilde{W}$,
then it holds for $V$ and $W$.

\begin{proof}
Consider the diagram

\[
\xymatrix{
\sM_{g,n}(\widetilde{V} \times \widetilde{W}) \ar@/^/[rrd]^-{\widetilde{h}} \ar@/_/[rdd]_{\sM(\Phi\times \Psi)}\\
&       
   & \widetilde{P}\ar[r]\ar[d]  
      &  \sM_{g,n}(\widetilde{V}) \times \sM_{g,n}(\widetilde{W}) \ar[d]^{\sM(\Phi)\times\sM(\Psi)}\\
&\sM_{g,n}(V \times W)  \ar[r]^-h 
   & P\ar[r]\ar[d]  &  \sM_{g,n}(V) \times \sM_{g,n}(W) \ar[d] \\
&
   &   \mbar_{g,n} \ar[r]^-\Delta 
      & \mbar_{g,n}  \times \mbar_{g,n} \, ,
}
\]
where
\[
\sM(\Phi)\colon \sM_{g,n}(\widetilde{V}) \to \sM(V),
\]
\[
\sM(\Psi)\colon \sM_{g,n}(\widetilde{W}) \to \sM(W),
\]
and 
\[
\sM(\Psi\times \Psi)\colon \sM_{g,n}(\widetilde{V}\times \widetilde{W}) \to \sM(V\times W)
\]
are maps between moduli stacks induced by $\Phi,\Psi$ and $\Phi\times\Psi$ respectively.

As $\Phi$ is proper, birational, log \'etale, by \cite[Theorem 1.1.1]{AW},
\[
\sM(\Phi)_*[\sM_{g,n}(\widetilde{V})]^\vir =[\sM_{g,n}(V)]^\vir.
\]
Similarly, we have
\[
\sM(\Psi)_*[\sM_{g,n}(\widetilde{W})]^\vir =[\sM_{g,n}(W)]^\vir,
\]and
\[
\sM(\Psi\times \Psi)_*[\sM_{g,n}(\widetilde{V}\times \widetilde{W})]^\vir= [\sM(V\times W)]^\vir.
\]

Now 
pushforwarding the relation
\[
\widetilde{h}_*[\sM_{g,n}(\widetilde{V}\times \widetilde{W})]^\vir =
\Delta^!([\sM_{g,n}(\widetilde{V})]^\vir \times [\sM_{g,n}(\widetilde{W})]^\vir)
\] along $\widetilde{P} \to P$ gives
\[
 h_*([\sM_{g,n}(V \times W)]^{\vir})
=\Delta^!([\sM_{g,n}(V)]^{\vir} \times [\sM_{g,n}(W)]^{\vir}).
\]
\end{proof}
\end{lemma}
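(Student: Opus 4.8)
The plan is to reduce the product formula for $V$ and $W$ to the one already assumed for the desingularizations $\widetilde{V}$ and $\widetilde{W}$, exploiting the invariance of virtual classes under proper birational log \'etale modifications of the target (Abramovich--Wise) together with the compatibility of refined Gysin homomorphisms with proper pushforward. The point is that all the \emph{auxiliary} objects appearing in the large diagram of Section~\ref{s:2}, namely $\fD$, $\fP$, $\mbar_{g,n}$ and the diagonal $\Delta$, depend only on the source curves and are insensitive to which target we map to; only the moduli stacks $\sM_{g,n}(-)$ and the maps between them change.

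First I would set up the functorial morphisms on moduli. The maps $\Phi$, $\Psi$ and their product $\Phi\times\Psi\colon \widetilde{V}\times\widetilde{W}\to V\times W$ are all proper, birational and log \'etale, hence induce stabilization morphisms $\sM(\Phi)\colon \sM_{g,n}(\widetilde{V})\to \sM_{g,n}(V)$, $\sM(\Psi)\colon \sM_{g,n}(\widetilde{W})\to \sM_{g,n}(W)$ and $\sM(\Phi\times\Psi)\colon \sM_{g,n}(\widetilde{V}\times\widetilde{W})\to \sM_{g,n}(V\times W)$. Since $\fD$ and $\Delta$ are unchanged, these morphisms assemble into the two-layer diagram displayed above: there is an induced morphism $\widetilde{P}\to P$ lying over $\sM(\Phi)\times\sM(\Psi)$ and over the identity of $\mbar_{g,n}$, and one has $(\widetilde{P}\to P)\circ\widetilde{h} = h\circ \sM(\Phi\times\Psi)$.

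Next I would apply \cite[Theorem 1.1.1]{AW} three times to obtain $\sM(\Phi)_*[\sM_{g,n}(\widetilde{V})]^{\vir} = [\sM_{g,n}(V)]^{\vir}$, $\sM(\Psi)_*[\sM_{g,n}(\widetilde{W})]^{\vir} = [\sM_{g,n}(W)]^{\vir}$ and $\sM(\Phi\times\Psi)_*[\sM_{g,n}(\widetilde{V}\times\widetilde{W})]^{\vir} = [\sM_{g,n}(V\times W)]^{\vir}$. Then, starting from the hypothesis $\widetilde{h}_*[\sM_{g,n}(\widetilde{V}\times\widetilde{W})]^{\vir} = \Delta^!([\sM_{g,n}(\widetilde{V})]^{\vir}\times[\sM_{g,n}(\widetilde{W})]^{\vir})$, I push forward along $\widetilde{P}\to P$. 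On the left-hand side this turns $\widetilde{h}_*[\sM_{g,n}(\widetilde{V}\times\widetilde{W})]^{\vir}$ into $h_*\sM(\Phi\times\Psi)_*[\sM_{g,n}(\widetilde{V}\times\widetilde{W})]^{\vir} = h_*[\sM_{g,n}(V\times W)]^{\vir}$. On the right-hand side I use that $\Delta^!$ commutes with proper pushforward along $\sM(\Phi)\times\sM(\Psi)$ (equivalently along $\widetilde{P}\to P$), the standard compatibility of refined Gysin maps with pushforward, cf.\ \cite[Theorem 4.3]{M1}, so it becomes $\Delta^!\big((\sM(\Phi)\times\sM(\Psi))_*([\sM_{g,n}(\widetilde{V})]^{\vir}\times[\sM_{g,n}(\widetilde{W})]^{\vir})\big) = \Delta^!([\sM_{g,n}(V)]^{\vir}\times[\sM_{g,n}(W)]^{\vir})$, the last step being the product of the first two Abramovich--Wise identities. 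Equating the two pushed-forward sides yields the claim.

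The main obstacle I expect is not the class manipulations, which are formal once everything is in place, but the bookkeeping for $\widetilde{P}\to P$: one must check that the two instances of the big diagram are genuinely compatible, that $\widetilde{P}$ really maps to $P$ over the correct base (so that the square relating $\widetilde{h}$, $\sM(\Phi\times\Psi)$, $h$ and $\widetilde{P}\to P$ commutes and is of the shape needed to invoke the pushforward/Gysin compatibility), and that $\sM(\Phi\times\Psi)$ is indeed compatible with the product stabilizations defining the top horizontal arrow of \eqref{e:2}. Verifying that $\Phi\times\Psi$ is again proper, birational and log \'etale, and that $\fD$ and $\Delta$ are untouched by the modification of the target, is routine.
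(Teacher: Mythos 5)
Your proposal is correct and follows essentially the same route as the paper: the same two-layer diagram, the same three applications of \cite[Theorem 1.1.1]{AW}, and the same pushforward of the assumed relation along $\widetilde{P}\to P$ using compatibility of $\Delta^!$ with proper pushforward. You merely spell out the left- and right-hand-side computations that the paper leaves implicit in the phrase ``pushforwarding the relation along $\widetilde{P}\to P$.''
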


\begin{remark}
Let $V$ be a smooth projective variety with log structure coming from a simple normal crossing divisor $\cup D_i$.
Motivated by the results proved in \cite{ACW, AMW}, we expect that  genus zero log GW invariants of $V$ are related to orbifold GW invariants of root stacks $V(\sqrt[r]{D_i})$.
If this naive expectation is valid, then the product formula for orbifolds proved in \cite{AJT} would imply the product formula of log GW invariants in genus zero.
\end{remark}

\section{Applications to relative Gromov--Witten invariants} \label{s:3}

We apply this to the \emph{relative GW} invariants as defined by
A.~Li and Y.~Ruan \cite{LR}, and J.~Li \cite{jL}.

Let $X$ and $Y$ be nonsingular projective varieties, 
and $D$ a a smooth divisor in $Y$. 
We further assume $H^1(Y)=0$, so a curve class of $X\times Y$ is of the form $(\beta_X, \beta_Y)$ 
where $\beta_X$ (resp.\ $\beta_Y$) is a curve class of $X$ (resp.\ $Y$).

Let $\mbar_{\Gamma_Y}(Y,D)$ be the relative moduli stack.
Here $\Gamma_Y=(g,n,\beta_Y,\rho, \mu)$ encodes the discrete data:
$g$ for the genus, $n+\rho$ for the number of marked points, $\beta_Y$ the curve class, 
$\mu= (\mu_1, ...\mu_{\rho})$ an ordered partition of $\int_{\beta_Y} [D]$.

We have evaluation maps
\[
 ev_Y \colon  \mbar_{\Gamma_Y}(Y,D) \to Y^n, \quad 
 ev_D\colon \mbar_{\Gamma_Y}(Y,D) \to D^{\rho}
\]
and the stabilization map 
\[
 \pi\colon \mbar_{\Gamma_Y}(Y,D)  \to \mbar_{g,n+\rho}.
\]
Relative GW invariants can be viewed as the 
\emph{Gromov--Witten transformation}
\[
R_{\Gamma_Y}\colon  H^*(Y)^{\otimes n} \otimes H^*(D)^{\otimes \rho} \to H^*(\mbar_{g,n+\rho})
\]
defined as 
\[
  \operatorname{PD} \left( \pi_* \left( ev_Y^*(\alpha ) ev_D^*( \delta ) 
  \cap [\mbar_{\Gamma_Y}(Y,D)]^{vir} \right) \right),
\]
where $\operatorname{PD}$ stands for the Poincar\'e duality.

Let $\Gamma_{X \times Y} =(g,n,(\beta_X,\beta_Y),\rho,\mu)$. 
Similarly we have 
\[
R_{\Gamma_{X \times Y}}\colon 
H^*(X \times Y)^{\otimes n} \otimes H^*(X \times D)^{\otimes \rho} \to H^*(\mbar_{g,n+\rho}).
\]

Let $\Gamma_X=(g,n+\rho, \beta_X)$.
The map 
\[
 R_{\Gamma_X}\colon  H^*(X)^{\otimes (n+\rho) }  \to H^*(\mbar_{g,n+\rho})
\] 
is the Gromov--Witten correspondence $R^X_{g,n+\rho,\beta_X}$,
defining a cohomological field theory.


\begin{corollary} \label{c:2.2}
\[
 \begin{split}
  &R_{\Gamma_{X \times Y}}
   ((\alpha_1 \otimes \alpha'_1) \otimes ... \otimes  (
  (\alpha_n \otimes \alpha'_n));
   (\alpha_{n+1} \otimes \delta_1) \otimes ... \otimes ((\alpha_{n+\rho} \otimes \delta_\rho)) \\
 = &R_{\Gamma_X} (\alpha_1 \otimes ... \otimes  \alpha_{n+\rho}) 
   R_{\Gamma_Y} (\alpha'_1 \otimes ... \otimes \alpha'_n; 
   \delta_1\otimes ...\otimes \delta_\rho),
 \end{split}
\]
where $\alpha_i \in H^*(X)$, $\alpha'_i \in H^*(Y)$ and
$\delta_j \in H^*(D)$.
\end{corollary}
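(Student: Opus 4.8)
The plan is to deduce the corollary from Theorem~\ref{t:main}, by way of the comparison between logarithmic and relative Gromov--Witten theory for smooth pairs, after which the statement reduces to the same formal manipulation with refined Gysin homomorphisms by which the product formula for invariants is extracted from the product formula for virtual classes in \cite{B}.

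First I would apply Theorem~\ref{t:main} with $V=X$, carrying the trivial log structure, and $W=(Y,D)$, carrying the divisorial log structure, so that $V\times W=X\times(Y,D)$ and the number of marked points is $n+\rho$. The log moduli stack $\sM_{g,n+\rho}(X\times(Y,D))$ is a disjoint union over discrete data; because the log structure of $X$ is trivial, all contact orders are recorded by the projection to $\sM_{g,n+\rho}((Y,D))$, and (using $H^1(Y)=0$ to split curve classes) the top row of \eqref{e:4} carries the component with curve class $(\beta_X,\beta_Y)$ and contact orders $\mu_1,\dots,\mu_\rho$ at the last $\rho$ markings onto the product of the $\beta_X$-component of $\sM_{g,n+\rho}(X)$ with the $\Gamma_Y$-component of $\sM_{g,n+\rho}((Y,D))$. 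By the comparison theorem for smooth pairs (\cite{AMW}, applied to $(Y,D)$, to the smooth pair $(X\times Y,\,X\times D)$, and trivially to $X$), these components, together with their virtual classes and their interior and contact evaluation maps, are identified with $\mbar_{\Gamma_Y}(Y,D)$, $\mbar_{\Gamma_{X\times Y}}(X\times Y,X\times D)$ and $\mbar_{\Gamma_X}=\mbar_{g,n+\rho}(X,\beta_X)$. Restricting the identity of Theorem~\ref{t:main} to these components gives
\[
h_*[\mbar_{\Gamma_{X\times Y}}]^{\vir}=\Delta^!\bigl([\mbar_{\Gamma_X}]^{\vir}\times[\mbar_{\Gamma_Y}]^{\vir}\bigr)
\]
as classes on $P$, whose underlying stack, by part~(I) of the Main Lemma, is the fibre product $(\mbar_{\Gamma_X}\times\mbar_{\Gamma_Y})\times_{\mbar_{g,n+\rho}\times\mbar_{g,n+\rho}}\mbar_{g,n+\rho}$ formed from the two stabilization maps and the diagonal.

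Next I would convert this into the claimed identity of $R$-transformations. Let $q_X\colon P\to\mbar_{\Gamma_X}$, $q_Y\colon P\to\mbar_{\Gamma_Y}$, $\pi_P\colon P\to\mbar_{g,n+\rho}$ be the structure maps. Every morphism on the left side factors through $h$: the $X$-components of $ev_{X\times Y}$ and of $ev_{X\times D}$ factor through $q_X$ followed by the evaluation maps of $\mbar_{\Gamma_X}$; the $Y$-component of $ev_{X\times Y}$ and the $D$-component of $ev_{X\times D}$ factor through $q_Y$ followed by $ev_Y$, $ev_D$; and the stabilization map $\pi$ of $\mbar_{\Gamma_{X\times Y}}$ equals $\pi_P\circ h$ compatibly with the stabilization maps of the two factors (this compatibility is exactly what produces the diagonal in $P$, as in \cite{B}). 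Put $\xi=\prod_{i=1}^{n+\rho}ev_i^*(\alpha_i)$ on $\mbar_{\Gamma_X}$ and $\eta=\prod_{i=1}^{n}ev_i^*(\alpha'_i)\,\prod_{j=1}^{\rho}ev_{D,j}^*(\delta_j)$ on $\mbar_{\Gamma_Y}$; the projection formula for $h$ followed by substitution of the displayed identity yields
\[
\operatorname{PD}^{-1}R_{\Gamma_{X\times Y}}(\cdots)=(\pi_P)_*\Bigl(q_X^*\xi\cdot q_Y^*\eta\cap\Delta^!\bigl([\mbar_{\Gamma_X}]^{\vir}\times[\mbar_{\Gamma_Y}]^{\vir}\bigr)\Bigr).
\]
Pulling $\xi$ and $\eta$ past $\Delta^!$ by compatibility of the cap product with refined Gysin maps, and then applying $(\pi_P)_*$ using compatibility of proper pushforward with refined Gysin maps for the cartesian square defining $P$, converts the right-hand side into $(\pi_X)_*(\xi\cap[\mbar_{\Gamma_X}]^{\vir})\cdot(\pi_Y)_*(\eta\cap[\mbar_{\Gamma_Y}]^{\vir})$, the intersection product on the smooth proper stack $\mbar_{g,n+\rho}$, which is $\operatorname{PD}$-dual to $R_{\Gamma_X}(\alpha_1\otimes\cdots\otimes\alpha_{n+\rho})\,R_{\Gamma_Y}(\alpha'_1\otimes\cdots\otimes\alpha'_n;\,\delta_1\otimes\cdots\otimes\delta_\rho)$; applying $\operatorname{PD}$ gives the assertion.

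The one genuinely delicate point is the first step: one must check that restricting the log moduli stack of $X\times(Y,D)$ to a single discrete-data component matches, under the top map of \eqref{e:4}, the product of a curve-class component of $\sM_{g,n+\rho}(X)$ with the $\Gamma_Y$-component of $\sM_{g,n+\rho}((Y,D))$ — which uses essentially that the log structure on $X$ is trivial, so that all contact data lives on the $(Y,D)$-factor — and that the logarithmic-to-relative comparison of \cite{AMW} is compatible with the divisor evaluation maps $ev_D$ at the contact markings and with stabilization to $\mbar_{g,n+\rho}$. Once these identifications are in place the rest is the standard intersection-theoretic bookkeeping of \cite{B}, carried out on the relative moduli stacks, and needs no input beyond Theorem~\ref{t:main} and the comparison theorem.
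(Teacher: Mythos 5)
Your proposal is correct and follows exactly the route the paper takes: the paper's own proof is the one-line statement that the corollary ``follows directly from Theorem~\ref{t:main} and the comparison result between relative and log GW invariants in \cite[Theorem 1.1, Section 2.3]{AMW},'' and your argument is a faithful expansion of that, filling in the component-by-component identification and the standard projection-formula bookkeeping from \cite{B} that the authors leave implicit.
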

\begin{proof}
This follows directly from Theorem~\ref{t:main} and the comparison result 
between relative and log GW invariants in \cite[Theorem 1.1, Section 2.3]{AMW}.
\end{proof}

\subsection*{Acknowledgements}
We wish to thank Q.~Chen, W.~D.~Gillam, and S.~Marcus for helpful discussions, and the referee for valuable comments. 
This research is supported in part by the NSF.

\end{document}